\documentclass[12pt]{amsart}

\usepackage{amsfonts, amssymb, amscd}
\usepackage{verbatim}
\usepackage{eucal}
\usepackage{amssymb}
\usepackage{mathrsfs}
\usepackage{graphicx}
\usepackage{psfrag}
\usepackage{cite}
\usepackage{upref}

\def\Spec        {{\text{Spec }}}
\def\im           {{\text{im }}}

\def\Tor			{{\rm Tor}}
\def\Pic		{{\rm Pic}}

\def\ZZ                 {{\mathbb Z}}
\def\PP                {{\mathbb P}}

\def\CC                 {{\mathbb C}}
\def\QQ                 {{\mathbb Q}}


\newtheorem{lemma}{Lemma}[section]
\newtheorem{theorem}[lemma]{Theorem}

\newtheorem{proposition}[lemma]{Proposition}
\theoremstyle{definition}

\newtheorem{conjecture}[lemma]{Conjecture}

\newtheorem{remark}[lemma]{Remark}
\theoremstyle{remark}
\newtheorem*{proof*}{Proof}
\numberwithin{equation}{section}
\begin{document}
\title[Picard-Fuchs Equations]{A Strange Family of Calabi-Yau 3-folds}

\author{Patrick Devlin}
\address{Rutgers University, Department of Mathematics, 110 Frelinghuysen Rd.,
Piscataway \\ NJ \\ 08854 \\ USA}
\email{prd41@math.rutgers.edu}
\author{Howard J. Nuer}
\address{Rutgers University, Department of Mathematics, 110 Frelinghuysen Rd.,
Piscataway \\ NJ \\ 08854 \\ USA}
\email{hjn11@math.rutgers.edu}

\begin{abstract}
We study the predictions of mirror symmetry for the 1-parameter family of Calabi-Yau 3-folds $\tilde{X}$ with hodge numbers $h^{11}=31,h^{21}=1$ constructed in \cite{BN}.  We calculate the Picard-Fuchs differential equation associated to this family, and use it to predict the instanton numbers on the hypothetical mirror.  These exhibit a strange vanishing in odd degrees.  We also calculate the monodromy action on $H^3(\tilde{X},\QQ)$ and find that it strangely predicts a positive Euler characteristic for its mirror.  From a degenerate fiber of our family we construct a new rigid Calabi-Yau 3-fold.  In an appendix we prove the expansion of the conifold period conjectured in \cite{ES} to hold for all 1-parameter families.
\end{abstract}

\maketitle

\section{Introduction}\label{sec1}

Ever since \cite{COGP} the mathematical ramifications of mirror symmetry have revolutionized algebraic geometry and have been heartily pursued by many mathematicians.  The mirror symmetry of Calabi-Yau threefolds with one-dimensional complex moduli space, that is $h^{21}=1$, are particularly interesting since predictions about the mirror Calabi-Yau threefold are easily obtained from the so-called Picard-Fuchs equation.  Defined as a fourth-order differential equation $$(A_4(z)D_z^4+A_3(z)D_z^3+A_2(z)D_z^2+A_1(z)D_z+A_0(z))f(z)=0,$$ satisfied by the periods $$f(z)=\int_{\gamma(z)}\Omega(z)$$ associated to the one-parameter family, the Picard-Fuchs equation can be used to calculate the Yukawa coupling for the variation of Hodge structure of this one-parameter family, the monodromy action on the third cohomology group, as well as the mirror map.  Using the latter, one obtains enumerative predictions for the number of rational curves on the mirror Calabi-Yau threefold.

Studying individual examples of families of Calabi-Yau threefolds and their mirror symmetry has been a crucial step in formulating mirror symmetry as a rigorous mathematical discipline.  New examples shape and hone our understanding of the validity of techniques and definitions which have become standard in the industry by now, but are still only conjectures at the end of the day.  It is with the hope of broadening our understanding of mirror symmetry that we present here a geometric example which seems to clash at every turn with the usual prescriptions of mirror symmetry.  The authors have only seen examples with such anomalies coming from Picard-Fuchs equations constructed formally but without a geometric one-parameter family attached to them.  

We now give a brief overview of the contents of this paper.  In Section \ref{const} we provide the details of the construction of the new family of Calabi-Yau threefolds with hodge numbers (31,1) briefly mentioned in \cite{BN}.  Consider the matrices $$
\left(
\begin{array}{cccc}
a&b+c&0&0\\
b-c&a&0&0\\
0&0&0&0\\
0&0&0&0\\
\end{array}
\right),~~
\left(
\begin{array}{cccc}
0&0&0&0\\
0&a&b+c&0\\
0&b-c&a&0\\
0&0&0&0\\
\end{array}
\right),
$$
$$
\left(
\begin{array}{cccc}
0&0&0&0\\
0&0&0&0\\
0&0&a&b+c\\
0&0&b-c&a\\
\end{array}
\right),~~
\left(
\begin{array}{cccc}
a&0&0&b-c\\
0&0&0&0\\
0&0&0&0\\
b+c&0&0&a\\
\end{array}
\right), a,b,c\in \CC
$$ as sections $s_1,...,s_4$ of the bundle $Q\otimes Q$ on $G(2,4)$, the Grassmannian of planes in $\CC^4$, where $Q$ is the tautological quotient bundle.  The family of determinantal varieties in $G(2,4)$ given by the vanishing of $\det(s_1,...,s_4)$ turns out to be a one-parameter family of nodal Calabi-Yau threefolds, and in Theorem \ref{construction} we prove that they exhibit small resolutions with hodge numbers (31,1).  We also investigate the singular fibers of this family and in doing so identify a good candidate for a point of maximally unipotent monodromy (MUM point for short), as well as a new rigid Calabi-Yau threefold.

In Section \ref{PF} we recall the necessary background on variation of Hodge structure to understand the Picard-Fuchs equation, and we determine it for the one-parameter family constructed in Section \ref{const}.  We show that the candidate from Section \ref{const} is indeed an MUM point.  We also identify the existence of a second singular point which is not an MUM point in the strict sense but may nevertheless correspond to some interesting phenomena in mirror symmetry.

We use the results of Section \ref{PF} to make predictions in Section \ref{Yukawa} about the instanton numbers of the conjectured mirror family to the one constructed in this paper by computing the A-model Yukawa coupling of the mirror.  It is here that we first encounter some of the anomalies of our family as the Yukawa coupling is an even function, indicating that all odd degree Gopakumar-Vafa invariants vanish.  This is the first example known to the authors of such a strange prediction, and we know of no actual varieties exhibiting this behavior.  We offer in this section a possible explanation for this strange behavior coming from torsion in the second homology of the mirror.  We also discuss the possibility of a second mirror variety and make similar instanton number predictions for this point.  This Yukawa coupling corresponds to the one associated to equation 110 in the list of \cite{AESZ} although the Picard-Fuchs equation and monodromy are different.  We observe a very surprising relationship between these two Yukawa couplings, namely $$2\kappa_{ttt}^{\infty}(q^2)=\kappa_{ttt}^0(q),$$ where $\kappa_{ttt}^p(q)$ is the Yukawa coupling at the point $p$ considered as a power series in $q$.

Inspired by \cite{ES} we calculate the monodromy action on the middle cohomology associated to our one-parameter family, as well as the conifold period, in Section \ref{monodromy} .  In an attempt to glean information about the hypothetical mirror we used the predictions of homological mirror symmetry to translate these monodromy matrices and conifold period into information about the basic invariants of this mirror.  Specifically, we predict the size of its fundamental group, $H^3,c_2\cdot H,\text{ and }c_3$, where $H$ is the ample generator of the rank one Picard group of the mirror.  Here we find another strange facet of our family: homological mirror symmetry predicts a positive Euler characteristic of 48, which is impossible for a Calabi-Yau threefold with $h^{11}=1,h^{21}=31$.  In the end we have been unable to determine a mirror.

In an appendix, we present a proof of the form of the conifold period, $$z_2(t)=\frac{H^3}{6} t^3+\frac{c_2(Y).H}{24}t+\frac{c_3(Y)}{(2\pi i)^3}\zeta(3)+O(q),$$ which was conjectured to hold in \cite{ES} for all one-parameter families of Calabi-Yau threefolds.

{\bf Acknowledgements.}  Our debt to the papers \cite{ES} and \cite{R} cannot be overstated.  The techniques and arguments here were greatly influenced by those presented there.  The second author would like to thank his advisor Lev Borisov for his continued guidance and support.  We are also extremely happy to thank Jim Bryan, Donu Arapura, Atsushi Kanazawa, Wadim Zudilin, and Duco van Straten and his two students Michael Bogner and Joerg Hofmann for invaluable discussions and insight into different aspects of mirror symmetry.  We are also grateful to Mike Stillman and Dan Grayson for the program \textit{Macaulay2} \cite{GS} which was crucial for investigating the geometry of our family.  The second author was partially supported by NSF Grant DMS 1201466.

\section{Construction and Basic Invariants of the (31,1) family}\label{const}
We recall here the construction of a family of $(31,1)$ Calabi-Yau threefolds via the degeneration of the $(2,32)$ family of Calabi-Yau threefolds constructed in \cite{BN}.  We also fill in the details that were glossed over there and describe the basic invariants of this family.

Consider the $4\times 4$ matrices 
$$
s_1=\left(
\begin{array}{cccc}
a&b+c&0&0\\
b-c&a&0&0\\
0&0&0&0\\
0&0&0&0\\
\end{array}
\right),~~
s_2=\left(
\begin{array}{cccc}
0&0&0&0\\
0&a&b+c&0\\
0&b-c&a&0\\
0&0&0&0\\
\end{array}
\right),$$
$$
s_3=\left(
\begin{array}{cccc}
0&0&0&0\\
0&0&0&0\\
0&0&a&b+c\\
0&0&b-c&a\\
\end{array}
\right),~~
s_4=\left(
\begin{array}{cccc}
a&0&0&b-c\\
0&0&0&0\\
0&0&0&0\\
b+c&0&0&a\\
\end{array}
\right),
$$ with $a,b,c\in \CC$.  We may view these as global sections of $Q\otimes Q$, where $Q$ is the universal quotient bundle on the Grassmannian $G(2,4)$ of planes in $\CC^4$.  Consider the determinantal variety $X_{a,b,c}\subset G(2,4)$ given by the vanishing of the determinant $D:=\det(s_1, s_2,s_3,s_4)$.  For the sake of clarity we write down explicitly the equations for this variety as a complete intersection in $\PP^5$ of a quartic hypersurface with $G(2,4)$.  Pulling back $D$ and writing it in terms of the Pl\"{u}cker coordinates, which we denote by $y_0,...,y_5$, one finds this family can be written as the intersection of $G(2,4)$, defined by $h_1=y_2 y_3-y_1 y_4+y_0 y_5=0$, and the quartic hypersurface given by 
$$
\begin{aligned}
&h_2=c(a^3 y_0^2 y_1 y_2-a^3 y_0 y_1 y_2^2+2a^2by_0^2y_1y_4+a^3y_0y_1^2y_4+2a^2by_1^3y_4-\\&2a^2by_0y_1y_2y_4+4ab^2y_1^2y_2y_4+2a^2by_1y_2^2y_4+a^3y_0^2y_3y_4+2a^2by_0y_1y_3y_4+\\&4ab^2y_1^2y_3y_4+a^3y_0y_3^2y_4+2a^2by_1y_3^2y_4-a^3y_0y_1y_4^2+8b^3y_1^2y_4^2+4ab^2y_1y_2y_4^2\\&+4ab^2y_1y_3y_4^2+2a^2by_1y_4^3+4ab^2y_0^2y_1y_5-a^3y_0^2y_1y_5-4ab^2y_0y_1y_2y_5+\\&a^3y_0y_1y_2y_5-4ab^2y_0y_1y_3y_5+a^3y_0y_1y_3y_5+a^3y_1y_3^2y_5-4ab^2y_0^2y_4y_5+\\&a^3y_0^2y_4y_5-16b^3y_0y_1y_4y_5-a^3y_1^2y_4y_5-4ab^2y_0y_2y_4y_5+a^3y_0y_2y_4y_5-\\&2a^2by_1y_2y_4y_5-a^3y_2^2y_4y_5-4ab^2y_0y_3y_4y_5+a^3y_0y_3y_4y_5+2a^2by_1y_3y_4y_5+\\&a^3y_1y_4^2y_5-4ab^2y_0y_1y_5^2+a^3y_0y_1y_5^2+a^3y_1y_3y_5^2+4ab^2y_0y_4y_5^2-a^3y_4y_5^2+\\&2a^2by_1y_4y_5^2+a^3y_2y_4y_5^2)=0.
\end{aligned}
$$
Notice from the equations that we may assume $c\neq 0$ and scale it to be 1.  Moreover, scaling the two parameters $a,b$ simultaneously doesn't change the variety, so we may view this family as being over $\PP^1$.  

We have the following result enumerating the important properties of $X:=X_{a,b}$:
\begin{theorem}\label{construction} For generic choices of $a,b$ we have

(1) $X$ is an irreducible threefold whose singular locus consists of 118 ordinary double points.

(2) There is a small resolution $\pi:\tilde{X}\rightarrow X$ 
of the ordinary double points, with $\tilde{X}$ a
non-singular Calabi-Yau threefold.

(3) $\chi(\tilde{X})=60$, 
$h^{1,1}(\tilde{X})=31$, and
$h^{2,1}(\tilde{X})=1$. \end{theorem}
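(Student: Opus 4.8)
\emph{Part (1).} I would first record that $X=\{h_1=h_2=0\}$ is a $(2,4)$ complete intersection in $\PP^5$, equivalently a quartic section of the quadric fourfold $G(2,4)$, and is therefore Cohen--Macaulay of pure dimension $3$. Its singular locus is the subscheme of $X$ on which the $2\times 6$ Jacobian of $(h_1,h_2)$ drops rank, and geometrically this degeneration is governed by the determinantal map $\phi=(s_1,\dots,s_4)\colon \Oa^{\oplus 4}\to Q\otimes Q$. The expected singular stratum is the corank-two locus $D_2(\phi)$, whose Thom--Porteous class is $c_2(Q\otimes Q)^2-c_1(Q\otimes Q)\,c_3(Q\otimes Q)$; computing the Chern classes of $Q\otimes Q$ in the Schubert basis gives $58\,\sigma_{2,2}$, i.e. $58$ points for a generic such map. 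Because the $s_i$ are highly symmetric rather than generic, however, $X$ also acquires nodes at corank-one points where the differential of $\det\phi$ happens to vanish, so the true count exceeds the naive determinantal value and must be pinned down directly. I would therefore compute the Jacobian ideal explicitly, exploiting the cyclic and reflection symmetry of the $s_i$ to reduce the number of cases (and using \textit{Macaulay2} for bookkeeping), check that it is zero-dimensional and reduced of length $118$, and verify at orbit representatives that the Hessian of the local equation has full rank $4$, so that every singular point is an ordinary double point. Finally, since $\dim\mathrm{Sing}(X)=0$, $X$ is regular in codimension one; being a complete intersection it is $S_2$, hence normal by Serre's criterion; and as an ample divisor on the connected fourfold $G(2,4)$ it is connected, so normal plus connected yields irreducible.

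\emph{Part (2).} Each node is analytically the cone over $\PP^1\times\PP^1$, so it admits two local small resolutions, each replacing the point by a $\PP^1$. The task is to glue these into a global \emph{projective} small resolution. I would build $\tilde X$ from the determinantal structure---over the corank-two nodes the Springer-type incidence variety $\{(p,\ell):\ell\subset\ker\phi_p\}$ supplies the exceptional $\PP^1$'s---and handle the remaining nodes by blowing up a Weil divisor on $X$ that fails to be Cartier exactly along them, checking smoothness of $\tilde X$ in local coordinates at each node. The existence of such a global resolution is not automatic: it requires the $118$ vanishing cycles to satisfy appropriate homological relations, which is the same input as the defect computation below. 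Once $\tilde X$ is smooth the resolution is small, hence crepant, so $K_{\tilde X}=\pi^*K_X=\Oa_{\tilde X}$; and $h^{1,0}=h^{2,0}=0$ follows from the complete-intersection structure via Lefschetz, so $\tilde X$ is a genuine Calabi--Yau threefold.

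\emph{Part (3).} I would obtain $\chi$ from Chern classes: a smooth $(2,4)$ complete intersection in $\PP^5$ has $c_3(T)=-22\,H^3$ and hence $\chi=-176$; degenerating it to the nodal $X$ raises $\chi$ by one per node (the vanishing $S^3$ contributes $\chi=0$) and the small resolution raises it by one more per node (each replaced by a $\PP^1$), giving $\chi(\tilde X)=-176+2\cdot 118=60$. This already forces $h^{1,1}-h^{2,1}=30$. To separate the two numbers I would compute the defect $\delta$, the number of independent relations among the $118$ node classes, equivalently $118$ minus the number of conditions the nodes impose on the relevant linear system. Since the smooth quartic section has Picard rank one by Grothendieck--Lefschetz and hence $(h^{1,1},h^{2,1})=(1,89)$, the conifold-transition bookkeeping gives $h^{1,1}(\tilde X)=1+\delta$ and $h^{2,1}(\tilde X)=89-(118-\delta)$. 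Showing that the nodes impose exactly $88$ conditions, i.e. $\delta=30$, then yields $(h^{1,1},h^{2,1})=(31,1)$.

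The hard part will be the two places where genericity fails me: establishing the exact node count $118$ (with all of them ordinary double points), and computing the defect $\delta=30$. Neither can be read off from Thom--Porteous or general position, precisely because the symmetric sections $s_i$ are special; both reduce to explicit rank computations for matrices attached to the $118$ nodes, and verifying that these matrices have the claimed ranks---together with checking that the vanishing cycles satisfy the relations needed for a projective small resolution to exist at all---is the technical heart of the theorem.
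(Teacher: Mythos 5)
Your proposal is correct and follows essentially the same route as the paper: the node count and the ordinary-double-point verification are done by explicit Jacobian/Hessian elimination in \textit{Macaulay2}, the small resolution is obtained by successively blowing up the non-Cartier Weil-divisor components on which the nodes lie, and $\chi$ and the Hodge numbers come from the nodal-degeneration bookkeeping $\chi=-176+2\cdot 118=60$ together with a defect computation (which the paper outsources to Section~7 of \cite{BN} and further \textit{Macaulay2} work). Your Thom--Porteous digression and the explicit defect formula $(h^{1,1},h^{2,1})=(1+\delta,\,89-(118-\delta))$ are extra scaffolding the paper omits, but the substance of the argument is the same.
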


\begin{proof} (1) We note that $X$ is the complete intersection defined by $h_1,h_2$ and thus is certainly connected.  Irreducibility will follow from the fact that the singular locus is zero dimensional.  To see that its singular
locus is as claimed for generic choice of parameters, we may check on each standard affine open subset $U_i$ of $G(2,4)$, given by $\{y_i\neq 0\}$, which is isomorphic to $\CC^4=\Spec \CC[x_1,...,x_4]$.  $X\cap U_i$ then becomes a hypersurface, say $V(h)$, and the locus of worse-than-nodal points can be described by adding the Hessian of $h$ to the Jacobian ideal of $X\cap U_i$.  By eliminating the coordinate variables from this ideal for each $i$, for example by using \textit{Macaulay2}, one finds that the worse-than-nodal locus on $\PP^1$ is given by $a^2(a^2-b^2)b^2=0$.  By checking over all parameter values in $\PP^1$ over various finite fields one easily sees that 118 is the generic number of nodes.  (An exceptional parameter value for which $X$ has more than 118 nodes will be discussed later in the paper).

(2) We construct one such small resolution using \textit{Macaulay2}.  First notice that we can scale $b$ to be 1 since $\infty\in \PP^1$ has unnodal fiber anyway.  Then we consider the intersection of $Sing(X)$ with the union of the coordinate hyperplanes given by $V(y_0\cdot\cdot\cdot y_5)$.  By calculating the Hilbert polynomial of $Sing(X)\cap V(y_0\cdot\cdot\cdot y_5)$, one finds that 114 of the singular points are contained in this locus.  We consider the effective Cartier divisor defined by $y_0\cdot\cdot\cdot y_5$ on $X$.  Using \textit{Macaulay2}, we can explicitly calculate the irreducible components $\{S_i\}_{i=0,...,18}$ of this Cartier divisor and find that 13 of them, say $S_0,...,S_{12}$, are smooth surfaces.  Moreover, all 114 of the above-mentioned singular points lie on the union of these 13 surfaces, and thus these are non-Cartier Weil divisors.  If we blow up $X$ along $S_0$, then we obtain a small partial resolution $X^0$ of the nodes of $X$ that lie on $S_0$.  By considering the proper transform $\tilde{S}_1$ of $S_1$ on $X^0$, we proceed to blow up $X^0$ along $\tilde{S}_1$.  Above the nodes we resolved previously $X^0$ is smooth and thus $\tilde{S}_1$ is Cartier there, so this blow-up has no effect there, and we obtain another small partial resolution $X^1$ of the nodes of $X$ contained in $S_0\bigcup S_1$.  Proceeding in this way, we find a small partial resolution $\hat{X}$ of 114 of the 118 nodes of $X$.  For the remaining 4 nodes, one notices that they lie on the hyperplane given by $V(y_1-y_4-ay_5+ay_0)$.  Again consider the Cartier divisor on $X$ given by $V(y_1-y_4-ay_5+ay_0)\cap X$.  It has 4 irreducible components which are all smooth surfaces.  By blowing each of these up in succession as before, we resolve the remaining 4 nodes to obtain a small resolution $\tilde{X}$.  That it is a Calabi-Yau is standard, but details are given in Section 7 of \cite{BN}.

(3) The claim about the topological Euler characteristic follows immediately from the
fact that a generic (2,4) complete intersection has Euler characteristic -176, and from
the number of nodes.  The calculation of the Hodge numbers follows from the results of Section 7 in \cite{BN} and \textit{Macaulay2} calculations.  
\end{proof}

\begin{remark} It is worth noting that the small resolution above is not canonical.  The order in which the $S_i$ are blown up likely determines different smooth birational models of $X$ related by flops.  To obtain a simultaneous small resolution of the family over the locus of $\PP^1$ parametrizing fibers with 118 nodes, one must take an \'{e}tale cover of this locus to eliminate the possible monodromy amongst the $S_i$.  Since the B-model is unaffected by resolution of singularities, we may work on the explicit singular family above and use it as a universal family of objects over the moduli space.  This will allow us to use mirror symmetry below to make predictions about the hypothetical mirror.
\end{remark}

\subsection{Degenerate Fibers}

Using \textit{Macaulay2} one can verify that this family becomes reducible for $(a,b)=(0,1),(\pm 1,1),(1,0)$.  Over $(0,1)$ it becomes the union of a degree 4 complete intersection with 6 nodes given by $2y_2y_3-y_1y_4=2y_0y_5-y_1y_4=0$, and two quadric cones given by $y_4=y_2y_3+y_0y_5=0$ and $y_1=y_2y_3+y_0y_5=0$, respectively.  The decomposition over $(1,1)$ is harder to see explicitly, but the Hilbert polynomial of its singular locus has degree 2, so it must certainly be reducible there.  At infinity, the family becomes the union of a degree 2 component given by $y_0-y_2+y_3+y_5=-y_2y_3+y_1y_4-y_2y_5+y_3y_5+y_5^2=0$, which is a quadratic cone, and a degree 6 component given by $y_2y_3-y_1y_4+y_0y_5=y_0y_1y_2+y_0y_3y_4+y_1y_3y_5+y_2y_4y_5=0$ with 34 nodes.  

From the proof of the above theorem, we saw that the nodal singularities of the generic fibre live along the union of the coordinate axes with another moving hyperplane given by $y_1-y_4-ay_5+ay_0$.  One can use elimination theory again to determine if there are any fibers which acquire additional singularities (this time by localizing at $y_0\cdot\cdot\cdot y_5\cdot (y_1-y_4-ay_5+ay_0)$ to avoid the known singularities).  We indeed find that $(\pm\sqrt{-8},1)$ gives such a fiber.  By checking over various finite fields fiber by fiber, we became quite certain that there are no others.  We'll see below from the Picard-Fuchs equation  that this is indeed the case.

\subsection{A New Rigid Calabi-Yau}\label{rigid}
The fiber of our family above $a=\sqrt{-8}$ is a nodal Calabi-Yau threefold with 122 nodes.  One can easily verify that it has a crepant resolution, obtained similarly as before, and using the techniques of Section 7 of \cite{BN} we found that the hodge numbers of the resolution are $(34,0)$.  To the best of our knowledge this is a new example of a rigid Calabi-Yau threefold.  As traditional mirror symmetry does not apply to this variety, it would be interesting to understand this example in the context of generalized mirror symmetry as in \cite{CDP}.

\subsection{Effectivity of this family}
Although it is not obvious from looking at the equations, the above family is slightly ineffective in a moduli-theoretic sense since $X_{a,1}\cong X_{-a,1}$:

\begin{proposition}\label{eff}  There is a natural isomorphism $X_{a,1}\cong X_{-a,1}$
\end{proposition}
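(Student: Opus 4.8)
The plan is to realise the isomorphism geometrically, as the restriction to $X_{a,1}$ of an automorphism of the ambient Grassmannian $G(2,4)$ coming from a single linear automorphism of $\CC^4$. First I would record how $GL(4,\CC)$ acts on the construction: an element $g$ induces the automorphism $\phi_g\colon [V]\mapsto[gV]$ of $G(2,4)$ together with a compatible action on $Q$, under which a global section of $Q\otimes Q$ represented by a $4\times 4$ matrix $M$ is sent to the congruence $gMg^{T}$. Since the equation defining $X$ only records whether $s_1,\dots,s_4$ become linearly dependent in the (four-dimensional) fibres of $Q\otimes Q$ --- a property of the \emph{span} of the sections, invariant under any linear automorphism of the fibre --- one obtains the clean rule $\phi_g\big(X_{\{N_i\}}\big)=X_{\{gN_ig^{T}\}}$, where $X_{\{N_i\}}$ denotes the determinantal variety attached to sections $N_1,\dots,N_4$.

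Next I would exploit the shape of the sections. Writing $s_i=aD_i+O_i$ with $D_i$ the diagonal ($a$-)part and $O_i$ the off-diagonal ($b,c$-)part, the off-diagonal positions occurring among $s_1,\dots,s_4$ are exactly the edges $(1,2),(2,3),(3,4),(4,1)$ of the $4$-cycle on $\{1,2,3,4\}$, the last edge coming from the ``wrap-around'' section $s_4$. The key choice is $g=\mathrm{diag}(1,-1,1,-1)$: this sign vector is a proper $2$-colouring of the $4$-cycle, so conjugation fixes every diagonal entry and negates every off-diagonal entry, giving $gs_ig^{T}=aD_i-O_i$ for all $i$. A direct inspection identifies $aD_i-O_i$ with the section $s_i$ for the parameters $(a,-b,-c)$, whence $\phi_g(X_{a,b,c})=X_{a,-b,-c}$. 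Finally, as each $s_i$ is linear in $(a,b,c)$ and $\det$ has even degree in the sections, the zero locus is unchanged under $(a,b,c)\mapsto(-a,-b,-c)$, so $X_{a,-b,-c}=X_{-a,b,c}$; specialising to $b=c=1$ gives $\phi_g(X_{a,1})=X_{-a,1}$. It is convenient that $g$ is an involution with $g=g^{T}=g^{-1}$, so no inverse- or orientation-convention needs to be tracked.

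I expect the only genuinely delicate point to be the first step: justifying that the $GL(4,\CC)$-action on sections of $Q\otimes Q$ is the congruence $M\mapsto gMg^{T}$, and that the vanishing locus depends only on the span of $s_1,\dots,s_4$, so that both the overall rescaling of sections and the identification $X_{a,-b,-c}=X_{-a,b,c}$ are legitimate. Once this functoriality is secured, the remaining verifications --- that $\mathrm{diag}(1,-1,1,-1)$ negates precisely the off-diagonal entries and that the resulting matrices are the sections for $(a,-b,-c)$ --- are immediate from the explicit matrices, and if one prefers they can be confirmed directly by checking that the corresponding change of Pl\"{u}cker coordinates preserves $h_1$ and carries $h_2$ at $a$ to $h_2$ at $-a$ modulo the ideal of $G(2,4)$.
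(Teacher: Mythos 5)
Your proposal is correct and takes essentially the same route as the paper: both exhibit the isomorphism as the restriction of the automorphism of $G(2,4)$ induced by a diagonal sign involution of $\CC^4$ (your $\mathrm{diag}(1,-1,1,-1)$ is the negative of the paper's $\mathrm{diag}(-1,1,-1,1)$ and induces the identical congruence), which negates exactly the $b,c$-entries and so carries $X_{a,b,c}$ to $X_{a,-b,-c}$, followed by the observation that negating all sections leaves the determinantal locus unchanged, giving $X_{a,-b,-c}=X_{-a,b,c}$. The only difference is presentational: you spell out the $GL(4,\CC)$-equivariance and the $4$-cycle $2$-colouring picture, which the paper leaves implicit.
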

\begin{proof}  Consider the linear map on $\CC^4$ with basis $e_1,...,e_4$ given by $e_1\mapsto -e_1,e_2\mapsto e_2,e_3\mapsto -e_3,e_4\mapsto e_4$. Then this has the effect on the matrices $s_i$ of negating the entries which involve $b,c$ while leaving the rest of the matrix unchanged.  Thus $X_{a,b,c}\cong X_{a,-b,-c}$ in the original notation.  But negating the sign of all entries of each matrix $s_i$ doesn't change the determinantal locus at all, so $X_{a,-b,-c}=X_{-a,b,c}$.  Thus indeed $X_{a,1}\cong X_{-a,1}$.   
\end{proof}

This tells us that $a^2$, not $a$ is the natural moduli parameter for this family.  We will switch to this parameter shortly.

\subsection{Integral homology of $\tilde{X}$}

For the sake of completeness and for later use, we record here the important facts about the integral homology of our $\tilde{X}$:

\begin{proposition}\label{top} The nonsingular small resolution $\tilde{X}$ of the complete intersection $X$ is simply connected.  Moreover, for its homology we have 

(i) $H_i(\tilde{X})=H_i(\PP^3)$ for $i\neq 2,3,4$;

(ii) $H_4(\tilde{X})$ is torsion free;

(iii) $\Tor(H_3(\tilde{X}))=\Tor(H_3(X))$.
\end{proposition}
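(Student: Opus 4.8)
The plan is to exploit the fact that $\tilde X$, the nodal variety $X$, and a generic (hence smooth) $(2,4)$ complete intersection $X_t$ smoothing the $118$ nodes all share a common ``outer'' piece and differ only in standard local models around the nodes. Write $\Sigma=\mathrm{Sing}(X)$ for the set of ordinary double points, $E=\pi^{-1}(\Sigma)$ for the exceptional locus (a disjoint union of $118$ copies of $\PP^1\cong S^2$), and let $X^\circ\subset X$ be the complement of small open balls around the points of $\Sigma$. Near each node $X$ is analytically the conifold $\{xy-zw=0\}\subset\CC^4$, whose link is $S^2\times S^3$; the small resolution replaces the cone point by the total space of $\Oa(-1)^{\oplus 2}\to\PP^1$, which is homotopy equivalent to $S^2$, the smoothing replaces it by the Milnor fibre $\simeq S^3$, and away from the nodes all three varieties are diffeomorphic to $X^\circ$. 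Thus $\tilde X=X^\circ\cup N$, $X=X^\circ\cup C$, and $X_t=X^\circ\cup M$, each glued along $L:=\bigsqcup^{118}(S^2\times S^3)$, with $N\simeq\bigsqcup S^2$, $C$ contractible, and $M\simeq\bigsqcup S^3$.

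For simple connectivity, and hence for part (i), I would argue by van Kampen. Since $S^2\times S^3$ and each local piece $N_i\simeq S^2$ are connected and simply connected, attaching them to $X^\circ$ changes nothing, so $\pi_1(\tilde X)=\pi_1(X^\circ)$; the identical argument applied to the $S^3$-pieces gives $\pi_1(X_t)=\pi_1(X^\circ)$. But $X_t$ is a smooth complete intersection threefold in $\PP^5$, so $\pi_1(X_t)=0$ by the Lefschetz hyperplane theorem. Hence $\pi_1(\tilde X)=0$, which gives $H_1(\tilde X)=0$; Poincar\'e duality and universal coefficients then yield $H_5(\tilde X)\cong H^1(\tilde X)=0$, while $H_0$ and $H_6$ are standard, proving (i). The same input proves (ii): since $\mathrm{Ext}(H_1(\tilde X),\ZZ)=0$ we get $H_4(\tilde X)\cong H^2(\tilde X)\cong\mathrm{Hom}(H_2(\tilde X),\ZZ)$, and $\mathrm{Hom}(H_2(\tilde X),\ZZ)$ is torsion free.

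For part (iii) the plan is to run the Mayer--Vietoris sequences of the two decompositions $\tilde X=X^\circ\cup N$ and $X=X^\circ\cup C$ simultaneously, noting that the maps $\iota_3\colon H_3(L)\to H_3(X^\circ)$ and $\iota_2\colon H_2(L)\to H_2(X^\circ)$ induced by $L\hookrightarrow X^\circ$ are literally the same in both. The one local computation needed is that the inclusion $\partial N_i\hookrightarrow N_i$ induces an isomorphism $H_2(S^2\times S^3)\xrightarrow{\sim}H_2(N_i)=\ZZ\langle[\PP^1]\rangle$, which I would extract from the long exact sequence of the pair $(N_i,\partial N_i)$ together with Lefschetz duality $H_k(N_i,\partial N_i)\cong H^{6-k}(N_i)$. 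Granting this, the map $H_2(L)\to H_2(X^\circ)\oplus H_2(N)$ for $\tilde X$ is injective, so Mayer--Vietoris collapses to $H_3(\tilde X)\cong\Coker(\iota_3)$; the sequence for $X$, using $H_3(C)=H_2(C)=0$, gives the exact sequence $0\to\Coker(\iota_3)\to H_3(X)\to\ker(\iota_2)\to 0$. Combining the two yields $0\to H_3(\tilde X)\to H_3(X)\to\ker(\iota_2)\to 0$, whose cokernel, being a subgroup of $H_2(L)=\ZZ^{118}$, is free; the sequence therefore splits and $\Tor(H_3(\tilde X))=\Tor(H_3(X))$.

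The main obstacle is the justification of the shared local picture: one must be sure the nodes are genuine ordinary double points with the standard conifold neighbourhood (guaranteed by Theorem \ref{construction}), that $X^\circ$ is correctly identified across both the resolution and the smoothing, and above all that the link-to-exceptional-curve map on $H_2$ is an isomorphism rather than, say, multiplication by a larger integer. This single computation is what forces the cokernel $\ker(\iota_2)$ to be torsion free and hence drives all three conclusions.
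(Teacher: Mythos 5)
Your proposal is correct, but it reaches the conclusions by a genuinely different route than the paper. The paper never introduces the smoothing: it quotes Dimca's results on singular complete intersections to get simple connectivity of $X$, $H_j(X)=H_j(\PP^3)$ for $j\neq 3,4$, and torsion-freeness of $H_4(X)$ directly on the nodal variety, and then transfers these to $\tilde X$ by excising the nodes, comparing the long exact sequences of the pairs $(X,U)$ and $(\tilde X,V)$ where $U$ is a union of balls around the nodes and $V=\pi^{-1}(U)$ retracts onto the exceptional $S^2$'s; part (iii) falls out because $H_3(X)\cong H_3(\tilde X)\oplus K$ with $K=\ker(\ZZ^{118}\to H_2(\tilde X))$ free. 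You instead run the full conifold-transition picture $\tilde X=X^\circ\cup N$, $X=X^\circ\cup C$, $X_t=X^\circ\cup M$, import the global input from the \emph{smooth} member $X_t$ via Lefschetz and van Kampen, handle (ii) by Poincar\'e duality plus universal coefficients on $\tilde X$ itself, and prove (iii) by a Mayer--Vietoris comparison hinging on the local lemma that $H_2(S^2\times S^3)\to H_2(N_i)$ is an isomorphism. The two mechanisms for (iii) are essentially equivalent (excision versus Mayer--Vietoris; your $\ker(\iota_2)$ plays the role of the paper's $K$), but the sources of the global facts differ: the paper's citation of Dimca is shorter and avoids any discussion of smoothings, while your argument is more self-contained and topological and would apply verbatim to any nodal threefold admitting both a small resolution and a smoothing --- at the cost of having to justify that a generic $(2,4)$ complete intersection near $X$ really is obtained from $X^\circ$ by gluing in Milnor fibres, and that each node has the standard conifold local model. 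Both of those burdens are dischargeable here, so the proof stands.
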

\begin{proof} That the complete intersection $X$ is simply connected follows from Corollary 5.2.4 in \cite{Dim}.  The resolution $\pi:\tilde{X}\rightarrow X$ replaces the nodes by simply connected $\PP^1$'s, so it too is simply connected.  For the statement about homology, we first note that by Theorem 5.4.3 and Corollary 5.4.4 in \cite{Dim} we have $H_j(X)=H_j(\PP^3)$ for $j\neq 3,4$, and $H_4(X)$ is torsion-free.  Now consider the union $U$ of small open balls around the nodes of $X$ and its preimage $V=\pi^{-1}(U)$.  Then by excision for the union of the nodes $W$ (the exceptional locus $E$ of $\pi$, respectively) we get that $H_i(X-W,U-W)\cong H_i(X,U)$ (respectively, $H_n(\tilde{X}-E,V-E)\cong H_i(\tilde{X},V)$).  Thus clearly $H_n(X,U)\cong H_n(\tilde{X},V)$ since the excised relative homologies are obviously isomorphic.  Using the long exact sequence of homology for the pair $(\tilde{X},V)$ and the fact that $V$ deformation retracts onto $E$, topologically a union of $S^2$'s, we get that $H_i(\tilde{X})\cong H_i(\tilde{X},V)$ for $i>3$.  We also obtain two exact sequences: $$0\rightarrow H_1(\tilde{X})\rightarrow H_1(\tilde{X},V)\rightarrow \ZZ^{117}\rightarrow 0, \text{and}$$ $$0\rightarrow H_3(\tilde{X})\rightarrow H_3(\tilde{X},V)\rightarrow \ZZ^{118}\rightarrow H_2(\tilde{X})\rightarrow H_2(\tilde{X},V)\rightarrow 0.$$  Doing the same for the pair $(X,U)$ and using the fact that $U$ deformation retracts onto 118 points, we see that $H_i(X)\cong H_i(X,U)$ for $i>1$ and we get an exact sequence $$0\rightarrow H_1(X)\rightarrow H_1(X,U)\rightarrow \ZZ^{117}\rightarrow 0.$$  It follows from this that $H_i(X)\cong H_i(\tilde{X})$ for $i\neq 2,3$ which proves \textit{(i)} and \textit{(ii)}.  Since $H_2(\tilde{X},V)\cong H_2(X,U)\cong H_2(X)\cong \ZZ$, we get that $H_2(\tilde{X})\cong \ZZ\oplus M$ and $H_3(X)\cong H_3(\tilde{X})\oplus K$, where $M=\im(\ZZ^{118}\rightarrow H_2(\tilde{X}))$ and $K=\ker(\ZZ^{118}\rightarrow H_2(\tilde{X}))$.  Since $K$ is free, \textit{(iii)} follows as well.  
\end{proof}
\section{Picard-Fuchs Equation and Maximally Unipotent Monodromy}\label{PF}
\subsection{Theory behind the approach}

For details and proofs of the techniques used in this section, see \cite{CK}.  For notational simplicity, we assume $b=1$ and write our family as $\tilde{X}_a$ with moduli parameter $a$.  

Let $S=\PP^1-\{0,\pm 1,\pm \sqrt{-8},\infty\}$ be the uncompactified complex moduli space of our family, and let $\mathcal F^0=R^3\pi_* \CC\otimes \mathcal O_S$ be the induced local system, with subbundles $\mathcal F^p$ which correspond to the Hodge filtration on fibers.  By the nilpotent orbit theorem of Schmid in \cite{Sch}, we may canonically extend these bundles to a filtration of bundles $$0\subset \overline{\mathcal F}^3\subset ...\subset \overline{\mathcal F}^0,$$ on the compactification, $\PP^1$.  Since $\overline{\mathcal F}^3$ is a line bundle (as $\pi$ is a family of Calabi-Yau 3-folds and thus $h^{3,0}=1$), we may choose a fixed local generator $\Omega \in \overline{\mathcal F}^3$ around $a=0$.  The Picard-Fuchs equation is a differential equation satisfied by the periods $f(a)=\int_{\gamma(a)} \Omega(a)$ for $\gamma(a)$ a 3-cycle on $\tilde{X}_a$ moving continuously with $a$.

To see where the Picard-Fuchs equation comes from, we note that if $\nabla$ is the Gauss-Manin connection on $\mathcal F^0$ (we also use the same notation for the logarmithmic extension of it to the compactification, due to \cite{Del1}), then according to \cite{BG} $\Omega,\nabla_{\delta} \Omega,\nabla_{\delta}^2 \Omega,\nabla_{\delta}^3\Omega$ are generically linearly independent, where $\delta=a\frac{d}{da}$ is seen as a tangent vector along $S$, and thus form a basis in a punctured neighborhood of the origin.  Applying $\nabla_{\delta}$ once more then gives a relation of the form $$\nabla_{\delta}^4\Omega+B_3(a) \nabla_{\delta}^3\Omega+B_2(a)\nabla_{\delta}^2\Omega+B_1(a) \nabla_{\delta}\Omega+B_0(a) \Omega=0,$$ where the $B_i$ are holomorphic near 0 because Deligne's extension has only regular singular points.  We may of course clear denominators to obtain a holomorphic coefficient in front of $\nabla_{\delta}^4 \Omega$, and by restricting to algebraic differentials the coefficients must then be polynomial.  Applying this operator to any period $f(a)=\int_{\gamma(a)}\Omega$ shows that it must satisfy a differential equation, $$\delta^4 f(a)+B_3(a)\delta^3 f(a)+B_2(a)\delta^2 f(a)+B_1(a)\delta f(a)+B_0(a) f(a)=0,$$called the Picard-Fuchs equation.  

We must also consider the monodromy action $\mathcal T:H^3(\tilde{X},\CC)\rightarrow H^3(\tilde{X},\CC)$ obtained by going around the point $a=0$, where $\tilde{X}$ is a smooth fiber over a point very close to the origin.  According to the monodromy theorem \cite{Lan}, this linear action is quasi-unipotent, i.e there exist positive integers $n,m$ such that $$(\mathcal T^n-1)^m=0, (\mathcal T^n-1)^k\neq 0 \text{ for }k<m,$$ with unipotent index $m$ at most 4, since $\dim H^3(X,\CC)=2h^{3,0}+2h^{2,1}=4$.  An MUM point is defined by the condition that $n=1,m=4$.  Thus we can find a basis $g_0,g_1,g_2,g_3$ of $H^3(\tilde{X},\CC)$ such that $$\mathcal T=\left(\begin{matrix} 1 &1&0&0\\0&1&1&0\\0&0&1&1\\0&0&0&1\end{matrix}\right).$$  Then we can choose a basis in homology $\gamma_0,\gamma_1,\gamma_2,\gamma_3$ Poincare dual to this basis.  

 According to the nilpotent orbit theorem, $g_0$ extends to a single-valued flat section of $\overline{\mathcal F}^0$, and accordingly $f_0=\int_{\gamma_0}\Omega$ extends to a single-valued holomorphic function at $a=0$.  If follows from the form of $\mathcal T$ that analytically continuing $f_i=\int_{\gamma_i}\Omega$ around $a=0$ then gives $f_i+f_{i-1}$.  It follows that at an MUM point there should be two cycles $\gamma_0,\gamma_1$ whose periods, $f_0,f_1$ respectively, satisfy $t:=f_1/f_0=g+\log a$, where $g$ is holomorphic at 0.

  We have the following proposition from \cite{CK} that allows us to use the Picard-Fuchs equation to determine the type of monodromy we have around a given boundary point:

\begin{proposition}\label{CK}  Let the Picard-Fuchs equation for a given local section $\Omega$ of $\overline{\mathcal F}^0$ be written as $$\delta^4 f+B_3(a)\delta^3 f+B_2(a)\delta^2 f+B_1(a) \delta f+B_0(a) f=0.$$ Then the monodromy action $\mathcal T$ is unipotent if and only if the roots of the indicial equation are integers.  Furthermore, $\mathcal T$ is maximally unipotent if and only if the indical equation is of the form $(y-l)^4=0$ for some integer $l$.
\end{proposition}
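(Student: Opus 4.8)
The plan is to treat the Picard--Fuchs operator as an ordinary differential operator with a regular singular point at $a=0$ (which we may, since Deligne's extension has only regular singular points) and to extract the local monodromy from the classical Frobenius theory of such operators, as developed in \cite{CK}. Substituting $f=a^y$ and keeping the leading-order term (so that each $B_i(a)$ is replaced by $B_i(0)$ and $\delta^k a^y=y^k a^y$), the indicial equation is $P(y)=y^4+B_3(0)y^3+B_2(0)y^2+B_1(0)y+B_0(0)=0$, whose roots $\rho_1,\dots,\rho_4$ are the local exponents. First I would recall the basic dictionary: in a Frobenius basis adapted to $a=0$ every solution has leading behaviour $a^{\rho_j}$, possibly multiplied by powers of $\log a$, and analytic continuation around $a=0$ multiplies $a^{\rho_j}$ by $e^{2\pi i \rho_j}$. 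Hence the eigenvalues of $\mathcal T$ are exactly the numbers $e^{2\pi i\rho_j}$.

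From this dictionary the first assertion is immediate: $\mathcal T$ is unipotent precisely when all its eigenvalues equal $1$, that is, when $e^{2\pi i \rho_j}=1$ for every $j$, which holds if and only if every $\rho_j\in\ZZ$. I would make this precise by first treating the non-resonant case, where the functions $a^{\rho_j}h_j(a)$ with $h_j$ holomorphic and $h_j(0)\neq 0$ form a basis that diagonalizes $\mathcal T$, and then observing that in the general case one only moves $\mathcal T$ within its conjugacy class, so the eigenvalues, and hence the characterization of unipotence, are unchanged.

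For the second assertion I would argue through the logarithmic structure of the Frobenius solutions, using the standard fact that, in the unipotent case, the largest power of $\log a$ occurring in the local solution space equals one less than the size of the largest Jordan block of $\mathcal T$, since $(\mathcal T-1)$ lowers the $\log$-degree by one. If the indicial equation is $(y-l)^4$ with $l\in\ZZ$, the Frobenius method produces a basis $f_k=a^l\big((\log a)^k h_0(a)/k!+\cdots\big)$ for $k=0,1,2,3$ with $h_0(0)\neq 0$; since $a^l$ is single-valued, the substitution $\log a\mapsto \log a+2\pi i$ carries this basis into a single Jordan block of size $4$, so $\mathcal T$ is maximally unipotent. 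Conversely, maximal unipotence means $\mathcal T$ is one Jordan block of size $4$, which by the quoted fact forces the existence of a solution carrying $(\log a)^3$; the goal is then to show that such a full-length logarithmic chain can arise only when a single exponent occurs with multiplicity $4$, so that $P(y)=(y-l)^4$ with $l\in\ZZ$ by the integrality already established.

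The hard part will be exactly this converse. The difficulty is that logarithms in the Frobenius solutions are produced not only by a repeated root but also by \emph{resonance}, i.e.\ by distinct integer exponents differing by nonzero integers, and such resonances can by themselves manufacture Jordan blocks larger than one (already for a second-order operator the exponents $\{0,1\}$ can yield a size-$2$ block). The crux is therefore to control how much logarithmic growth each resonance can contribute and to rule out assembling a length-$4$ chain from a genuinely non-coincident exponent set; this is the one step that goes beyond formal manipulation of the indicial polynomial and eigenvalue bookkeeping. In the geometric situation at hand this is precisely where the extra rigidity of the polarized variation of Hodge structure enters: the requirement that the limiting mixed Hodge structure at the boundary point be of Hodge--Tate type forces the four exponents to coincide, and it is this Hodge-theoretic input, rather than ODE theory alone, that I would use to close the argument.
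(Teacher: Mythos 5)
The paper itself offers no proof of this proposition: it is quoted from \cite{CK} and used as a black box, so there is no internal argument to measure yours against, and I assess your sketch on its own terms. The parts you actually carry out are correct and standard: the eigenvalues of $\mathcal T$ are $e^{2\pi i\rho_j}$ with the $\rho_j$ the indicial roots counted with multiplicity, which gives the first assertion; and a quadruple root $(y-l)^4$ leaves no resonance (the indicial polynomial is nonzero at $l+n$ for $n\geq 1$), so the Frobenius solutions $\partial_\rho^k y(a,\rho)\vert_{\rho=l}$, $k=0,\dots,3$, carry nonvanishing leading coefficients in front of $(\log a)^k$ and the substitution $\log a\mapsto\log a+2\pi i$ acts as a single Jordan block. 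You also correctly diagnose that the converse is the only hard point, and that it is genuinely false for abstract fourth-order operators: distinct integer exponents in resonance can assemble a full-length Jordan chain, so no amount of eigenvalue bookkeeping will close the argument.

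The problem is that you do not close it either; the sentence ``the requirement that the limiting mixed Hodge structure be of Hodge--Tate type forces the four exponents to coincide'' is the entire content of the converse and is left unproved, with two separate missing steps. First, Hodge--Tate-ness is not a requirement one imposes but a consequence to be derived: writing $N=\log\mathcal T$ and using the Deligne bigrading $I^{p,q}$ of the limiting mixed Hodge structure, $N^3\neq 0$ forces $I^{3,3}\neq 0$ (since $N^3 I^{p,q}\subset I^{p-3,q-3}$ can be nonzero only for $p=q=3$), and because $\sum_q\dim I^{3,q}=h^{3,0}=1$ this propagates, via injectivity of the $N^k$ on $I^{3,3}$, to $I^{p,q}=0$ for $p\neq q$. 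Second, and more importantly, you must connect this to the indicial equation: by the nilpotent orbit theorem a generator of $\overline{\mathcal F}^3$ has the form $\Omega=\exp\bigl(\tfrac{\log a}{2\pi i}N\bigr)A(a)$ with $A$ holomorphic and $A(0)$ spanning $F^3_\infty=I^{3,3}$; Hodge--Tate-ness gives $N^3A(0)\neq 0$, hence $A(0),NA(0),N^2A(0),N^3A(0)$ are linearly independent, hence every flat dual cycle pairs nontrivially with some $N^kA(0)$ and every period has a nonvanishing leading log-coefficient at $a=0$. That is the statement which pins all four exponents to a single integer, and it is also where you must use that $\Omega$ is (up to $a^l$ times a unit, which accounts for the shift $y\mapsto y-l$ noted after the proposition) a generator of $\overline{\mathcal F}^3$ rather than an arbitrary local section of $\overline{\mathcal F}^0$, for which the claim would fail. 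Until these two steps are written out, the converse direction — which is the direction the paper actually invokes to rule out $z=1,\infty$ as MUM points — is not established.
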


One can check that this integer $l$ is insignificant in that altering the given holomorphic 3-form $\Omega$ by multiplying by a meromorphic function $p(a)=c_l a^l+...$ with $c_l\neq 0$ alters the indicial equation of the new differential equation by replacing $y$ with $y-l$.  So the significant part is that all roots of the indicial equation be equal.

\subsection{Determining the Picard-Fuchs equation near $a=0$}

We saw above that the fiber above $a=0$ was highly degenerate, breaking into three irreducible components.  This suggests that $a=0$ is a good candidate for an MUM point.  Since the $B$-model (that is the Hodge theoretic side of mirror symmetry) is unchanged via desingularization, we may do all of our calculations on the complete intersection $X_a$.  We may also restrict ourselves to the open affine $U_0\subset \PP^5$ given by $y_0\neq 0$ and choose a 3-cycle   inside the open subset $X_a\cap U_0$ which various continuously with $a$ and against which we will integrate a local section of $\overline{\mathcal F}^3$ to obtain a period.  

We may further simplify the calculation by considering $X_a\cap U_0$ as the hypersurface in $\CC^4$ with coordinates $y_1,...,y_4$ defined by the equation $h(y_1,...,y_4)=h_2(1,y_1,...,y_4,y_1 y_4-y_2 y_3)$.  The natural choice for a local section of the canonical bundle of $X_a$ on $U_0$ is then given by the residue of \[\Psi=(\frac{1}{2\pi i})^4 \frac{dy_1\wedge ...\wedge dy_4}{h}.\]  We may choose a constant 4-cycle $\Gamma$ on $\CC^4\backslash X_a\cap U_0$, and then a period can be obtained as $$f_0(a)=\int_{\Gamma} \Psi(a).$$  Since all periods must satisfy the Picard-Fuchs equation, the choice of such a 4-cycle is irrelevant for our purposes.  One obviously must worry about holomorphicity, but we can choose an appropriate 4-cycle in a moment. 

The essence of the approach we take follows that of R{\o}dland in \cite{R}.  We find that $$\Psi=\frac{1}{16}\cdot\frac{1}{ 1-\sum_{i} v_i}\cdot \bigwedge \frac{dy_i}{(2\pi i) y_i},$$ where this comes from writing $\frac{y_1 \cdot\cdot\cdot y_4}{h}$ as \[\frac{1}{(\frac{h}{y_1\cdot\cdot\cdot y_4})},\] and writing the denominator in terms of the Laurent monomials $v_i$.  Then we take the 4-cycle $\Gamma$ to be a topological torus given $|y_i|=\epsilon_i$ and consider the geometric series expansion of $$\frac{1}{ 1-\sum_{i} v_i}.$$ For this series to converge and to allow us to manipulate it freely, it suffices to ensure that $\sum |v_i|<1$.  We can do this by choosing $\epsilon_1=\epsilon_3=.05,\epsilon_2=\epsilon_4=.5$, and $|a|<1/40$ for example.  When expanding the resulting absolutely convergent series as a Laurent series in terms of the $v_i$, the only terms that contribute to the integral are those without any $y_i$'s.  Indeed, if a Laurent monomial has either negative powers or positive powers for some $y_i$, then when multiplied by $\frac{dy_i}{(2\pi i) y_i}$ the resulting function of $y_i$ being integrated either has a removable singularity (the total exponent would then be at most -2) or is holomorphic, and thus the integral of this function over the closed circle $|y_i|=\epsilon_i$ is 0.  

Now generators of the subring of $\CC[\{v_i\}]$ generated by those monomials that are independent of the $y_i$'s can be calculated as in the appendix to \cite{R}, and a closed form for the period $f_0(a)$ can be obtained as R{\o}dland does.  Unfortunately, because of the number of Laurent monomials $v_i$ and the fact that the generators of this subring have different powers of $a$, the resulting form of the period is almost useless and involves an infinite sum in approximately 40 indices.  Instead we may calculate iterated residues in \textit{Maple} to compute the power series expansion of $f_0$ to a large number of terms.  We find that $$f_0(a)=-\frac{1}{16}-\frac{3}{64}a^2-\frac{81}{2048}a^4-\frac{143}{4096}a^6-\frac{66357}{2097152}a^8-...,$$ where we notice that this function is entirely even.  This is what we expect from Proposition \ref{eff}.

Converting to the true moduli parameter $z=a^2$, we use recurrence relations on the coefficients of the power series to determine what polynomials $A_i(z)$ satisfy the Picard-Fuchs equation $$A_4(z)D_z^4 f_0(z)+...+A_1(z)D_z f_0(z)+A_0(z) f_0(z)=0,$$ where $D_z=z\frac{d}{dz}$.  We find a solution in degree 6 given by the operator  $$\begin{aligned}\mathcal D:=&16(z-1)^3(z-4)^2(z+8)D_z^4+96z(z-1)^2(z-4)(z^2-28)D_z^3\\&+12z(z-1)(18z^4-129z^3-136z^2+2000z-1024)D_z^2\\&+36z(192-752z+540z^2+99z^3-58z^4+6z^5)D_z\\&+3z(512-2688z+1824z^2+856z^3-288z^4+27z^5).\end{aligned}$$  We notice from the leading coefficient that as expected the only singular fibers are at $z=0,1,-8, \infty \in \PP^1$.  One can check that $z=4$ does not represent an actual singular fiber, and we'll see that it is the unique vanishing point for the B-model Yukawa coupling of this family.  From calculating the indicial equation of this ODE, we immediately get the following proposition:

\begin{proposition}  The point $z=0$ is an MUM point of the family of Calabi-Yau 3-folds constructed here.
\end{proposition}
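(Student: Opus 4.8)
The plan is to apply Proposition \ref{CK} directly: to show that $z=0$ is an MUM point, it suffices to verify that the indicial equation of the Picard-Fuchs operator $\mathcal{D}$ at $z=0$ has the form $(y-l)^4=0$ for some integer $l$. Since the holomorphic period $f_0(z)$ we computed is a genuine solution with $f_0(0)=-\frac{1}{16}\neq 0$, the operator already has the normalization in which we expect $l=0$, so the concrete goal is to check that the indicial equation is exactly $y^4=0$.

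First I would extract the indicial equation from the explicit operator. Writing $\mathcal{D}=\sum_{i=0}^4 A_i(z)D_z^i$ with $D_z=z\frac{d}{dz}$, the indicial polynomial is governed entirely by the constant terms $A_i(0)$, because substituting the ansatz $f\sim z^y$ and using $D_z z^y = y\,z^y$ picks out, at lowest order in $z$, the expression $\left(\sum_{i=0}^4 A_i(0)\,y^i\right)z^y$. So I would simply read off the value of each coefficient polynomial at $z=0$. From the displayed operator one sees that $A_0(z)$, $A_1(z)$, and $A_2(z)$ each carry an overall factor of $z$ (respectively $3z(\cdots)$, $36z(\cdots)$, and $12z(z-1)(\cdots)$) and $A_3(z)=96z(\cdots)$ also vanishes at $z=0$; hence $A_0(0)=A_1(0)=A_2(0)=A_3(0)=0$. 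The only surviving constant term is the leading one, $A_4(0)=16\cdot(-1)^3\cdot(-4)^2\cdot 8 = -2048 \neq 0$.

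Therefore the indicial equation reduces to $A_4(0)\,y^4 = 0$, that is $-2048\,y^4=0$, which is precisely $y^4=0$. By Proposition \ref{CK} this means the monodromy $\mathcal{T}$ around $z=0$ is maximally unipotent with repeated root $l=0$, establishing that $z=0$ is an MUM point.

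I do not anticipate a genuine obstacle here; the argument is a routine extraction of leading data from the already-established operator $\mathcal{D}$. The only point requiring care is bookkeeping: one must confirm that $A_1$, $A_2$, $A_3$ all vanish at $z=0$ and that $A_4(0)\neq 0$, since a nonzero constant term in a lower coefficient would shift the indicial equation away from $y^4$ and the presence of such a term in the leading coefficient would signal a spurious or shifted root. A mild conceptual subtlety worth a sentence is \emph{why} we expect $l=0$ rather than some other integer: this is exactly the normalization guaranteed by choosing $\Omega$ so that its holomorphic period $f_0$ is regular and nonvanishing at the origin, as discussed after Proposition \ref{CK}, so that the four local exponents coincide at $0$ rather than at a shifted integer.
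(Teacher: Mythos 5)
Your proposal is correct and follows exactly the paper's route: the paper likewise deduces the proposition immediately from the indicial equation of $\mathcal{D}$ at $z=0$ via Proposition \ref{CK}, and your explicit check that $A_1(0)=A_2(0)=A_3(0)=A_0(0)=0$ while $A_4(0)=-2048\neq 0$, giving indicial equation $y^4=0$, is precisely the calculation the paper leaves implicit.
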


\subsection{The Picard-Fuchs equation at other singular fibers}

In the new coordinate $z$, we find that there are 3 other singular values to check, $z=1,-8,\infty$.  Translating our ODE accordingly around those points, one finds that the Riemann scheme for our differential operator is
$$P\begin{Bmatrix}
\begin{tabular}{c c c c}
-8 & 0 & 1 & $\infty$ \\ 
\hline
0&0&0&3/2 \\
1&0&0&3/2 \\
1&0&-1/2&3/2 \\
2&0&1/2&3/2 \\
\end{tabular}
\end{Bmatrix}.$$  From Proposition \ref{CK} we see that the monodromy is only unipotent outside the origin around the point $z=-8$.  This value corresponds to where the family acquires extra nodes and is known as a conifold point.  That $z=-8$ has spectrum (this is the set of roots of its indicial equation) $\{0,1,1,2\}$ agrees with predictions made in \cite{ES} about the kind of monodromy around such a singular fiber and its spectrum.  

The spectrum for $\infty$ is somewhat strange.  It is not an MUM point in the strict sense, but if one takes a double cover of $\PP^1$, for example branched at 1 and $\infty$, then the spectrum  at $\infty$ becomes that of a genuine MUM point.  At first glance this suggests the existence of a second mirror family to $\tilde{X}_z$.  According to homological mirror symmetry this family should conjecturally be derived equivalent to the mirror family corresponding to the MUM point at $z=0$.  The validity of $z=\infty$ as an MUM point will be elaborated on later.

\section{Mirror Symmetry and A-model Yukawa couplings}
According to mirror symmetry, the B-model (Hodge-theoretic) Yukawa coupling on a Calabi-Yau threefold $X$ gives the A-model (Gromov-Witten) Yukawa coupling on its mirror $Y$ after applying the mirror map.  

The A-model Yukawa coupling is defined in terms of the genus zero Gromov-Witten (GW) invariants, and can be seen (in the Picard rank one case) as the generating function of the genus zero Gromov-Witten invariants over all degrees.  More specifically, the A-model Yukawa coupling is written as $$\kappa_{ttt}=H^3+\sum_{d=1}^{\infty} N_d q^d,$$ where $H$ is the ample generator of $\text{Pic}(Y)$, $N_d:=\int_{[\overline{M}_{0,0}(Y,d)]^{virt}}1$ is the degree $d$ unpointed genus 0 Gromov-Witten invariant, defined by integrating over the virtual fundamental class $[\overline{M}_{0,0}(Y,d)]^{virt}$ of the moduli space of stable maps $\overline{M}_{0,0}(Y,d)$.  Conjecturally, these are integers, and this power series can further be written as $$\kappa_{ttt}=H^3+\sum_{d=1}^{\infty} n_d \frac{d^3 q^d}{1-q^d},$$ where the $n_d$ are the Gopakumar-Vafa (GV) invariants, or instanton numbers, which naively should count the number of rational curves on $Y$ of degree $d$(degree being measured against the ample generator of Pic$(Y)$).  These numbers are also conjectured to be integral.  The relation between these two invariants is given by $$N_d=\sum_{k|d} n_{\frac{d}{k}} k^{-3}.$$  It should be noted that this relation is often the definition of the GV invariants since a rigorous mathematical definition of them does not exist at the moment.  The enumerative significance of both the GW and GV invariants is a subtle issue.

If we take $f_0$ to be the unique holomorphic solution to the Picard-Fuchs equation around an MUM point, and choose a period $f_1$ such that $t=f_1/f_0=g+\log z$ with $g$ holomorphic at $z=0$ and normalized so that $g(0)=0$, the mirror map is defined by $q=e^t$.  Obviously $q$ is determined only up to a constant.  Mirror symmetry then predicts that the A-model Yukawa coupling defined above is in fact equal to the B-model Yukawa coupling, that is $$\kappa_{ttt}=(\frac{d\log z}{dt})^3\frac{1}{f_0(z)^2}\int_{X_z} \Omega\wedge \nabla_{z\frac{d}{dz}}^3 \Omega,$$ where the right hand side is the B-model Yukawa coupling calculated by Hodge theory on the mirror family.

\subsection{A-model Yukawa coupling prediction}\label{Yukawa}

Applying these ideas to our family, we write $g$ as a power series in $z$ and substite the resulting formula for $f_1$ back into the Picard-Fuchs equation to get $$g(z)=\frac{3}{8}z+\frac{81}{512}z^2+\frac{187}{2048}z^3+\frac{64797}{1048576}z^4+..., \text{ and}$$$$q=c_2 z(e^g)=c_2(z+\frac{3}{8}z^2+\frac{117}{512}z^3+\frac{653}{4096}z^4+...).$$  

Using the Picard-Fuchs equation and Griffith's transversality, it can be shown that $L=\int_{X_z} \Omega\wedge \nabla_{z\frac{d}{dz}}^3 \Omega$ satisfies the differential equation $$\frac{dL}{L}=-\frac{A_3(z)}{2zA_4(z)},$$ so that in our case $$L=c_1 \frac{z-4}{(z-1)^3(z+8)}.$$ Here we notice that $z=4$ is the unique vanishing point of $L$.  

Finally, one can calculate the inverse series giving $z=z(q)$ as a power series in $q$, and using that $\frac{d \log z}{dt}=\frac{q}{z}\frac{dz}{dq}$, we put everything together to find that according to mirror symmetry $$\begin{aligned}\kappa_{ttt}^0&=m(2+27q^2-2232q^4+43617q^6-7425720q^8+...)\\&=m(2+9\frac{2^3 q^2}{1-q^2}-36\frac{4^3q^4}{1-q^4}+2019\frac{6^3 q^6}{1-q^6}+...),
\end{aligned}$$ where we've found the unique choice $c_2=1/32$ which makes all of the $n_d$ integral.  We've also let $c_1=4m$ to scale away the rest of the denominators.  This leads to the 
\begin{conjecture} The genus 0 GV invariants $n_d$ of the mirror to $\tilde{X}$ are given by the above values. 
\end{conjecture}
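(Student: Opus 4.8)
The plan is to read the values $n_d$ off the B-model data already assembled above and then to invoke mirror symmetry to interpret them enumeratively. First I would combine the B-model Yukawa coupling $L=c_1(z-4)/((z-1)^3(z+8))$, the holomorphic period $f_0(z)$, and the mirror map $t=f_1/f_0$ built from the two distinguished solutions of $\mathcal D$ near the MUM point $z=0$, forming $\kappa_{ttt}^0=(d\log z/dt)^3 f_0^{-2} L$. Inverting $q=q(z)$ to obtain $z=z(q)$ and re-expanding then produces $\kappa_{ttt}^0$ as the power series in $q$ displayed above.

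Next I would extract the $n_d$ by inverting the multicover relation $N_d=\sum_{k\mid d}n_{d/k}k^{-3}$, which a M\"obius-type recursion solves for each $n_d$ in terms of the coefficients $N_d$ of $q^d$ in $\kappa_{ttt}^0$. Because the computed series turns out to be even in $q$, this immediately forces $n_d=0$ for every odd $d$, accounting for the strange vanishing in the list. The two undetermined scalings are then fixed by arithmetic: one verifies that $c_2=1/32$ is the unique normalization of the mirror map making every $n_d$ an integer, and $c_1=4m$ clears the remaining denominators. This integrality, which is in no way forced by the differential equation alone, is the central piece of evidence supporting the statement.

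The hard part is that the preceding argument only establishes the $n_d$ as a formal mirror-symmetry prediction, not as a theorem about an honest variety. A genuine proof would require identifying the mirror threefold $Y$, matching the B-model variation of Hodge structure on $\tilde{X}_z$ with the quantum cohomology of $Y$, and computing the genus $0$ Gromov-Witten invariants $N_d(Y)$ directly. I expect the identification of $Y$ to be the true obstacle: as the monodromy and conifold-period computation of Section \ref{monodromy} will show, homological mirror symmetry forces a positive Euler characteristic, so no Calabi-Yau threefold with $(h^{11},h^{21})=(1,31)$ can serve as the mirror. For this reason the result must remain conjectural, resting on the integrality check and on the internal consistency of the Picard-Fuchs data rather than on any direct enumerative count.
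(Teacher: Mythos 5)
Your proposal follows essentially the same route as the paper: the $n_d$ are read off from the B-model Yukawa coupling $\kappa_{ttt}^0=(d\log z/dt)^3 f_0^{-2}L$ after inverting the mirror map, the normalization $c_2=1/32$ is fixed by integrality, and the statement is left as a conjecture precisely because no mirror $Y$ has been identified (the paper likewise notes the impossible positive Euler characteristic prediction). This matches the paper's treatment, which offers no proof beyond this mirror-symmetry computation and the integrality check.
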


Obviously this Yukawa coupling is strange since it suggests that $n_d=0$ for all odd $d$.  We discuss a possible explanation in the next section.  It is worth noting that the conjectured integrality of GW/GV invariants holds for our family, and moreover our family provides the first example known to the authors (and others) of such an even Yukawa coupling coming from geometry.

\subsection{Torsion in homology and the vanishing of odd GV invariants}

It is explained in \cite{AM} that if torsion is present in $H_2(Y,\ZZ)$, then the $A$-model Yukawa coupling takes on a more complicated form.  For example, if $H_2(Y,\ZZ)\cong \ZZ\times \ZZ_2$, then we find that $$\kappa_{ttt}=H^3+(n_1^0+(-1)^a n_1^1)q+O(q^2),$$ where $H^3=\deg Y$, $a=0$ or 1, and $n_1^i$ are the number of lines whose torsion component in homology is in the class $i\in\ZZ_2$.  Then $n_1^0+n_1^1$ is the total number of lines.  In this case if we had $n_1^0=n_1^1$ and $a=1$, then the corresponding coefficient of $q$ would vanish.  Assuming this paradigm continues in higher degrees, this could be a possible explanation of the vanishing of the odd coefficients in our Yukawa coupling.

It is worth noting that the predictions in \cite{BK} cannot apply in total generality to all families of Calabi-Yau threefolds, as the examples of \cite{HT},\cite{S} of double mirrors with different fundamental groups show.  Therefore torsion in homology may indeed explain the vanishing of the odd GW/GV invariants above.  Moreover, as noted in \cite{BK} $\Tor(H_2(Y,\ZZ))$ is dual to $Br(Y)$, the Brauer group of $Y$.  Thus nontrivial torsion in the second homology would correspond to a nontrivial Brauer group as well (see \cite{A}, \cite{HT} for examples of Calabi-Yau threefolds with nontrivial Brauer group).  We note the possible relevance of this later.

\subsection{Virtual GW/GV invariants at infinity}
As we observed above, $\infty$ is not a classicaly defined MUM point, but it almost is.  This type of point appears also in the families constructed by Kanazawa in \cite{Kan}.  As in those examples, we may transform our operator to the point at infinity and calculate the virtual GW/GV invariants that are conjectured to hold on the virtual mirror.  We use the term \textit{virtual} since it's not clear that mirror symmetry predicts the existence of a mirror or the equality of Yukawa couplings in this case.  Nevertheless, changing the coordinate from $z$ to $1/z$ and transforming the gauge by $\sqrt{z^3}$ transforms the Euler operator by $D_z\rightarrow -D_z-3/2$.  Thus the Picard-Fuchs operator at infinity becomes 
$$\begin{aligned}\tilde{\mathcal D}=&16(z-1)^3(4z-1)^2(8z+1)D_z^4\\&+96z(z-1)^2(4z-1)(32z^2-8z+3)D_z^3\\&+12z(z-1)(2304z^4-2464z^3+992z^2-118z+15)D_z^2\\&+36z(4z-1)(192z^4-304z^3+150z^2-30z+1)D_z\\&+3z^2(3456z^4-5840z^3+3408z^2-933z+152).\end{aligned}$$  Now this operator has an MUM point at the new origin, so we may proceed as before, and assuming the equality of the B-model Yukawa coupling with the virtual A-model Yukawa coupling, we get that after choosing the constants of integration to be $c_1=m$ and $c_2=-2^{-4}$ the Yukawa coupling is $$\begin{aligned}\kappa_{ttt}^{\infty}&=m(1+36q-1116q^2+218088q^3-3712860q^4+...)\\&=m(1+36\frac{q}{1-q}-144\frac{2^3q^2}{1-q^2}+8076\frac{3^3q^3}{1-q^3}-57996\frac{4^3q^4}{1-q^4}+...).
\end{aligned}$$  It is interesting to see that this Yukawa coupling again satisfies the integrality conjecture expected of genuine GW/GV invariants.  The mathematical meaning of these virtual numbers is not at the moment understood.  It would be important to understand them better in the future.  M. Bogner has pointed out to the second author that this Yukawa coupling is the same as that of equation 110 in the list of \cite{AESZ}.  The differential operator itself and the associated monodromy are different however.  

Upon reviewing the sequence of instanton numbers in $\kappa_{ttt}^{\infty}$ and comparing with those of $\kappa_{ttt}^0$, it becomes clear that $n^{\infty}_{d}=4n_{2d}^{0}$.  In terms of power series expansions one in fact has $2\kappa_{ttt}^{\infty}(q^2)=\kappa^0_{ttt}(q)$, at least up to 100 terms.  The meaning of these relations, geometric or otherwise, is unknown at the moment, but it suggests an intimate connection between the behavior at 0 and $\infty$.

\section{Calabi-Yau differential equations, Monodromy, and the search for a mirror pair}\label{monodromy}

As mentioned in the introduction, we have been unsuccesful in determining a mirror candidate to $\tilde{X}$.  In an attempt to determine a possible mirror, we used the observation that the Picard-Fuchs equation of a one-parameter family can be used to calculate the monodromy action of the family, and these matrices can be conjecturally used to determine the fundamental numerical invariants of its mirror.  In addition to the strange Yukawa coupling above, we find that our family also presents strange results in the context of these by-now standard techniques in mirror symmetry.  To see this, we follow the approach in \cite{ES} for calculating the monodromy matrices associated to a \textit{Calabi-Yau differential equation}.  

\subsection{Calabi-Yau Differential Equations}

These equations were defined in \cite{AESZ} as fourth-order ODE's $$\frac{d^4f}{dz^4}+a_3(z)\frac{d^3 f}{dz^3}+a_2(z)\frac{d^2 f}{dz^2}+a_1(z)\frac{df}{dz}+a_0(z) f(z)=0,$$ satisfying the following five conditions which are expected to correspond to equations obtained as Picard-Fuchs equations of genuine one-parameter families:

(1) The singular point at $z=0$ is an MUM point;

(2) The coefficients $a_i(z)$ satisfy the equation $$a_1=\frac{1}{2}a_2 a_3-\frac{1}{8}a_3^3+a_2'-\frac{3}{4}a_3 a_3'-\frac{1}{2}a_3'';$$

(3) The solutions $\lambda_1\leq \lambda_2\leq\lambda_3\leq\lambda_4$ of the indicial equation at $z=\infty$ are positive rational numbers satisfying $\lambda_1+\lambda_4=\lambda_2+\lambda_3,$ and we suppose that the eigenvalues of the monodromy around $z=\infty$ are the zeros of a product of cyclotomoic polynomials;

(4) The power series solution near $z=0$ has integral coefficients;

(5) The Yukawa coupling satisfies the integrality conjecture up to multiplication by a positive integer.  

One can easily check that our Picard-Fuchs equations at 0 and $\infty$ satisfy all of the above condition except (4), but this pathology is not serious.  By scaling the moduli parameters by 32 and -16, respectively, we obtain new ODE's which now satisfy all of the above conditions.  Of course, this does not actually change the family or the Yukawa couplings calculated above.

\subsection{Monodromy for the operator $\mathcal D$}
The monodromy matrices for the action of $\pi_1(S,p)$ on $H^3(\tilde{X},\QQ)$ associated to our family $\tilde{X_z}$ can be calculated numerically using \textit{Maple} as in \cite{ES} ($S=\PP^1-\{0,1,-8,\infty\}$ as above).  We obtain the monodromy matrices to be 
$$
T_0=\left(
\begin{array}{cccc}
1&0&0&0\\
1&1&0&0\\
\frac{1}{2}&1&1&0\\
\frac{1}{6}&\frac{1}{2}&1&1\\
\end{array}
\right),~~
T_{-8}=\left(
\begin{array}{cccc}
1&-8&0&-192\\
0&1&0&0\\
0&0&1&0\\
0&0&0&1\\
\end{array}
\right),~~T_4=Id,
$$
$$
T_1=\left(
\begin{array}{cccc}
8&-28&72&-96\\
\frac{7}{2}&-11&24&-24\\
\frac{5}{8}&-\frac{3}{2}&2&0\\
\frac{1}{48}&\frac{1}{12}&-\frac{1}{2}&1\\
\end{array}
\right),~~
T_{\infty}=\left(
\begin{array}{cccc}
-8&28&-120&96\\
-\frac{1}{2}&1&0&-24\\
\frac{1}{8}&-\frac{1}{2}&2&0\\
-\frac{1}{48}&\frac{1}{12}&-\frac{1}{2}&1\\
\end{array}
\right),
$$ where the matrices are presented in the first standard form mentioned in \cite{ES}.  It is certainly comforting that the monodromy around $z=4$ calculated in this way yields the identity, as one would expect from a smooth fiber.  

According to homological mirror symmetry (HMS), the relationship between the Calabi-Yau $\tilde{X}$ and its conjectural mirror $Y$ goes beyond just the mirror duality of the hodge diamond or the equality of the A and B-model Yukawa couplings.  Kontsevich expressed this symmetry by the stronger equivalence of two different derived categories associated to Calabi-Yau threefolds.  The main statement of HMS is that the bounded derived category of coherent sheaves on $\tilde{X}$, $D^b(\tilde{X})$, is equivalent to the derived Fukaya category of $Y$, $D\mathcal F(Y)$, and vice-versa.  Via the chern character this descends to cohomology as $$H^{ev}(\tilde{X},\QQ)\cong H^3(Y,\QQ),H^{ev}(Y,\QQ)\cong H^3(\tilde{X},\QQ).$$  Accordingly , the monodromy action on $H^3(\tilde{X},\QQ)$ above should correspond to autoequivalences of $D^b(Y)$ which descend to automorphisms of the even cohomology.  Moreover, the monodromy around the MUM point is conjectured to correspond to the action on $D^b(Y)$ given by tensoring by $\mathcal O(H)$, and the monodromy around the conifold point is conjectured to correspond to the spherical twist by $\mathcal O_Y$.  These two actions on $D^b(Y)$ descend to cohomology as the matrices $$\left(\begin{matrix} 1&0&0&0\\1&1&0&0\\\frac{1}{2}&1&1&0\\ \frac{1}{6}&\frac{1}{2}& 1&1\end{matrix}\right)\text{ and }\left(\begin{matrix} 1&-c&0&-d\\0&1&0&0\\0&0&1&0\\0&0&0&1\end{matrix}\right),$$ respectively, where we've used the basis $1,H,H^2,H^3$ for $H^{ev}(Y,\QQ)$, and $d:=H^3,c:=c_2\cdot H/12$.

In our family this yields the prediction that the mirror $Y$ has $H^3=192,c_2\cdot H=96$.  At first glance this may seem very promising since using Hirzebruch-Riemann-Roch and Kodaira vanishing we may then determine the natural embedding of $Y$ by the linear system $|H|$ and the numerics of the equations that define it.  But as in \cite{ES} we performed a second check of consistency for these predictions by calculating the so-called \textit{conifold period}.  

\subsection{The Conifold Period}The conifold period $f(z)=\int_{C(z)}\Omega$ is defined as the integral of our local generator $\Omega\in \overline{\mathcal F}^3$ against the vanishing cycle $C(z)$ near the conifold point $z=-8$, where $C(z)$ consists of the four $S^3$'s which get collapsed in the formation of the four nodes in the fiber above $z=-8$.  The monodromy around this point is the sum of the four Picard-Lefschetz transformations $$\alpha\mapsto \alpha-\langle\alpha,C_i(z)\rangle C_i(z)$$ associated to each vanishing sphere $C_i(z)$ (See Section 3.2.1 in \cite{V} for a discussion of vanishing spheres and the Picard-Lefschetz transformation).  Since the $C_i(z)$ are all homologous in $H^3(\tilde{X}_z,\QQ)$, this monodromy is a symplectic reflection in $C(z)$ (see \cite{DM}, where it also explained that the number of vanishing spheres should be $|\pi_1(Y)|$).  The Frobenius basis of the Picard-Fuchs equation at the conifold point has three holomorphic solutions and one solution of the form $$f(z)\log z +k(z).$$  Since the monodromy around the conifold point is a symplectic involution in $C(z)$, the conifold period must be $f(z)$ up to scaling.  Analytically continuing this function along a straight line to the MUM point at the origin, we can write this function as $$z_2(s)=\frac{H^3}{6}s^3+\frac{c_2\cdot H}{24}s+\frac{c_3}{(2\pi i)^3}\zeta(3)+O(q),$$ where $q=e^{2\pi i s}$ and $s=\frac{1}{2\pi i} t=\frac{1}{2\pi i} \log z+\frac{1}{2\pi i}g(z)$, in the notation from above (we include a derivation of this form of the conifold period in an appendix for lack of a reference).  Again following the algorithm of \cite{ES}, we numerically calculated the conifold period and scaled so that $H^3=192$ in the above expansion.  This indeed gave $c_2\cdot H=96$, providing an internal consistency check of the above monodromy matrix calculations.  Suprisingly, we also found a positive value for the topological Euler characteristic, $c_3=48$, which cannot correspond to a genuine mirror to our family.  It is again interesting that we nevertheless get integral values for these invariants as one would expect from a genuine geometric situation.  As an alternative to scaling the conifold period to give the expected $H^3$ in the above method, we may also scale to give the necessary Euler characteristic -60, obtaining $H^3=-240,c_2\cdot H=-120$.  Of course, replacing $H$ by $-H$ gives positive values for these.  It is quite possible that there is no way to distinguish between $\pm H$ in this expression of the conifold period.  

\subsection{Monodromy for $\tilde{\mathcal D}$}
As with the calculation of the Yukawa couplings, it seemed worth investigating the behavior near our almost MUM point at infinity.  Repeating the above calculations with the virtual Picard-Fuchs operator $\tilde{\mathcal D}$, one obtains monodromy matrices 
$$
T_0=\left(
\begin{array}{cccc}
1&0&0&0\\
1&1&0&0\\
\frac{1}{2}&1&1&0\\
\frac{1}{6}&\frac{1}{2}&1&1\\
\end{array}
\right),~~
T_{-1/8}=\left(
\begin{array}{cccc}
1&-4&0&-24\\
0&1&0&0\\
0&0&1&0\\
0&0&0&1\\
\end{array}
\right),~~T_{1}=Id,
$$
$$
T_{1/4}=\left(
\begin{array}{cccc}
7&12&24&0\\
1&3&12&24\\
-\frac{5}{2}&-5&-13&-12\\
\frac{7}{6}&\frac{13}{6}&5&3\\
\end{array}
\right),~~
T_{\infty}=\left(
\begin{array}{cccc}
7&16&24&168\\
-8&-17&-36&-168\\
2&4&11&36\\
-\frac{1}{3}&-\frac{2}{3}&-2&-5\\
\end{array}
\right),
$$ giving predictions of $H^3=24,c_2\cdot H=48$.  Calculating the conifold period confirms these numbers and again gives a prediction of $c_3=48$.  We note that like the virtual instanton numbers, these invariants are off by a factor of 4 from those calculated for the operator $\mathcal D$.

\section{Conclusion and Open Questions}
Without an explicit mirror $Y$, we cannot even begin to verify the predictions of mirror symmetry for the family of Calabi-Yau threefolds constructed here.  Nevertheless, we feel that the failed attempts to glean information about the mirror reveal the necessity to understand more deeply the prescriptions for calculating the mirror map and Yukawa couplings used above.  The work here presents some important questions:

1) Does the mirror map need to be altered in some cases?  Would this account for the strange vanishing of odd GV invariants on the conjectural mirror?  Or does torsion in the second homology group of the mirror indeed explain this vanishing? 

2) What is the meaning of the equal, but fractional, roots of the indicial equation for $\mathcal D$ at $\infty$?  Does this have a Hori-Tong GLSM description (see\cite{HoTo}) as suggested by \cite{Kan}?  

3) What is the meaning of the close relationship between invariants computed at 0 and $\infty$?

4) Do we need to reinterpret the correspondence between the vanishing cycle $C\in D\mathcal F(\tilde{X})$ and $\mathcal O_Y\in D^b(Y)$?  Would this explain the strange positive Euler characteristic prediction?  

5) Does the existence of a nontrivial Brauer group for the mirror suggest that derived categories of \textit{twisted} sheaves must be incorporated into the picture of homological mirror symmetry?  Would this explain the strange numerical invariants predicted by homological mirror symmetry?

Hopefully upon answering these questions, we may place the example constructed in this paper in its proper context in mirror symmetry.
\section{Appendix: Expansion of the conifold period}
It was asked in \cite{ES} if the appearance of the term $\frac{c_3}{(2\pi i)^3}\zeta(3)$ in the expansion of the conifold period obtained in \cite{COGP} is a mathematical theorem holding for families of Calabi-Yau threefolds other than the famous example of the quintic threefold treated there.  The purpose of this appendix is to prove that indeed this expansion always holds assuming the validity of the standard conjectures of HMS.  In particular, we assume that the vanishing cycle $S$ in $D\mathcal F(X)$ corresponds to $\mathcal O_Y$ in $D^b(Y)$ for its mirror partner $Y$.

\begin{theorem} Under the above assumptions, the conifold period $z_2(t)$ has the follow expansion up to scaling: $$z_2(t)=\frac{H^3}{6} t^3+\frac{c_2(Y).H}{24}t+\frac{c_3(Y)}{(2\pi i)^3}\zeta(3)+O(q),$$ where of course $H$ is the ample generator of $\Pic(Y)$ and $q=e^{2\pi i t}$ is the usual parameter in the mirror map.
\end{theorem}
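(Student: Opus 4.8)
The plan is to compute the conifold period entirely on the mirror side via the homological mirror symmetry we are granting. Since we assume the vanishing cycle $S\in D\mathcal F(X)$ corresponds to $\Oa_Y\in D^b(Y)$, and since the $\hat\Gamma$-integral structure (Iritani, Katzarkov--Kontsevich--Pantev, Hosono) -- one of the standard HMS conjectures -- identifies period integrals of $\Omega$ with central charges of the corresponding objects, the conifold period $z_2=\int_C\Omega$ is, up to the scaling already allowed in the statement, the central charge $Z(\Oa_Y)$ written in the flat coordinate $t$ furnished by the mirror map. Explicitly this structure gives \[Z(E)(t)=\frac{1}{(2\pi i)^3}\int_Y \hat\Gamma_Y\cup\mathrm{ch}(E)\cup J(t),\] where $J(t)=e^{2\pi i t H}(1+O(q))$ is the small $J$-function of $Y$, its higher terms being the genus-zero Gromov--Witten instanton corrections that vanish in the large-volume limit $q=e^{2\pi i t}\to 0$. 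Taking $E=\Oa_Y$, so $\mathrm{ch}(\Oa_Y)=1$, reduces the problem to the purely classical integral $\tfrac{1}{(2\pi i)^3}\int_Y e^{2\pi i t H}\hat\Gamma_Y$ modulo an explicit $O(q)$ tail.

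The second step is a direct expansion of the Gamma class of the threefold $Y$. Writing $\delta_i$ for the Chern roots of $TY$ and using the normalization $\hat\Gamma_Y=\prod_i\Gamma(1-\delta_i)$ together with $\log\Gamma(1-x)=\gamma x+\sum_{n\geq 2}\tfrac{\zeta(n)}{n}x^n$, one obtains $\log\hat\Gamma_Y=\gamma c_1+\sum_{n\geq 2}\tfrac{\zeta(n)}{n}p_n$ with $p_n=\sum_i\delta_i^n$. As $Y$ is Calabi--Yau we have $c_1=0$, so Newton's identities give $p_2=-2c_2$ and $p_3=3c_3$, whence \[\hat\Gamma_Y=1-\zeta(2)\,c_2+\zeta(3)\,c_3+(\mathrm{degree}\geq 8);\] only these terms survive a top-degree integral over a threefold. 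Multiplying by $e^{2\pi i t H}$, keeping the degree-six part, and integrating leaves $\tfrac{(2\pi i)^3t^3}{6}H^3-2\pi i t\,\zeta(2)(c_2\cdot H)+\zeta(3)\,c_3$, where $H^3,\ c_2\cdot H,\ c_3$ now denote the corresponding numbers $\int_Y(\,\cdot\,)$. Dividing by $(2\pi i)^3$ and using $\zeta(2)=\pi^2/6$ with $(2\pi i)^2=-4\pi^2$ turns the middle term into $+\tfrac{c_2\cdot H}{24}t$ and yields exactly $\tfrac{H^3}{6}t^3+\tfrac{c_2\cdot H}{24}t+\tfrac{c_3}{(2\pi i)^3}\zeta(3)$, with the $O(q)$ inherited from $J(t)$. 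This is the claimed expansion.

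The genuinely delicate point is not this bookkeeping but Step one: justifying the closed $\hat\Gamma$-form of the central charge and, above all, that the vanishing cycle maps to $\Oa_Y$ \emph{on the nose}, rather than to a shift or twist such as $\Oa_Y[1]$ or $\Oa_Y(k)$, since such an ambiguity would alter precisely the relative signs and the normalization that ``up to scaling'' cannot repair. I would pin this down by matching the monodromy dictionary already used in Section \ref{monodromy}: the monodromy about the MUM point must act on $H^{ev}(Y,\QQ)$ as $\otimes\,\Oa(H)$, while the conifold monodromy must be the Seidel--Thomas spherical twist along the image of $S$; demanding that this twist reduce to the Picard--Lefschetz reflection in the single class $C$ forces the mirror object to be a rank-one spherical object with trivial higher Chern character, i.e. $\Oa_Y$, and simultaneously fixes the overall scale and the orientation of $C$. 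Granting this dictionary the computation above is forced, and the only analytic ingredient -- that the quantum $J$-function differs from its classical limit $e^{2\pi i tH}$ by $O(q)$ -- is exactly the convergence of the instanton sum near the large-volume point, which is controlled by the maximal unipotency exhibited in Section \ref{PF}.
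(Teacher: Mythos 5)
Your proposal is correct and follows essentially the same route as the paper: both reduce the statement to the identification, granted by HMS, of the conifold period with the central charge $Z(\Oa_Y)$ in Iritani's $\hat\Gamma$-integral structure, and then read off the $t^3$, $t$, and $\zeta(3)$ terms. The only difference is that the paper imports the closed formula for $Z_{tH}(\Oa_Y)$ directly from \cite{Iri} and merely expands the genus-zero potential term $G(tH)$, whereas you re-derive that formula by expanding $\hat\Gamma_Y=1-\zeta(2)c_2+\zeta(3)c_3+\cdots$ yourself (and you add a welcome word of caution about pinning down $\Oa_Y$ on the nose, which the paper simply takes as part of its standing assumption).
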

\begin{proof}  In order to relate the conifold period to anything on the mirror, we must analytically continue it to the MUM point.  Then according to HMS $S\in D\mathcal F(X)$ should correspond to some object of $D^b(Y)$, $\mathcal O_Y$ by our assumption.  Moreover, according to HMS the pairing $\langle S(z),\Omega\rangle=\int_{S(z)}\Omega$ at the complex moduli point $z$ should correspond to taking the central charge of $\mathcal O_Y$ at the K\"{a}hler moduli point $t$ corresponding to $z$ (see comments on pages 2 and 10 of \cite{Iri}).  Of course the coordinate $t$ corresponds to the complexified divisor $tH$, where $H$ is the ample generator of $\Pic(Y)$.  

From the fourth formula on page 12 of \cite{Iri}, we get that this central charge is 

$$Z_{tH}(\mathcal O_Y)=-\frac{c_3(Y)}{(2 \pi i)^3}\zeta(3) -\frac{c_2(Y).H}{24}\frac{t}{2 \pi i} +\frac{1}{(2 \pi i)^3} G(tH).$$ Here the function $G$ is defined by 

$$G(tH):=2F_0(tH)-t\frac{d}{dt} F_0(tH),\text{ where }F_0(tH):=\frac{H^3}{6}t^3+\sum_{d>0}N_d e^{d H.C t}$$ is the genus zero potential of $Y$ and $C$ is the generator of $H_2(Y,Z)$ modulo torsion.  Expanding out $G(tH)$, we get

$$G(tH)= -\frac{H^3}{6} t^3-t (\sum_{d>0} dH.C N_d e^{d H.C t})+2(\sum_{d>0} N_d e^{d H.C t}).$$  Putting this back into the formula above for the central charge and noticing that the terms different from $-\frac{H^3}{6}t^3$ can be grouped into the $O(q)$ term, we get the desired form of the conifold period up to scaling by $-1$.
\end{proof}

\end{document}